\theoremstyle{plain}
\newtheorem{thm}{Theorem}
\theoremstyle{definition}
\newtheorem{rmk}[thm]{Remark}
\newcommand{\Zz}{\mathbb{Z}}
\newcommand{\Cz}{\mathbb{C}}
\newcommand{\txi}[1]{\tilde{\xi}_{#1}}
\begin{document}
\title{A solution in terms of mock modular forms for the $q$-Painlev\'{e} equation of the type $(A_2+A_1)^{(1)}$}
% \title{A generalization of Zwegers's $\mu$-function associated with the $q$-Hermite polynomial}
\author{S. Tsuchimi}
\date{}

\maketitle

\begin{abstract}
We present a solution of the $(A_2+A_1)^{(1)}$ $q$-Painlev\'{e} equation in terms of the $\mu$-function. 
The $\mu$-function introduced by Zwegers is the most fundamental object in the study of mock theta functions. 
The results of this paper give us an expectation that the theory of mock modular forms and the $\tau$-functions of discrete integrable systems are closely related. 
\end{abstract}

\section{Introduction}
Throughout this paper, let  $\tau\in\Cz$ be a complex number ${\rm Im}(\tau)>0$, and $q:=e^{\pi i\tau}$. We define the $q$-factorials and Jacobi theta function as follows:
\begin{align}
(q)_n&:=\prod_{l=1}^n(1-q^l)\quad (n\in\Zz_{>0}),\quad (q)_0:=1,\\
\vartheta(z;\tau)&=\vartheta(z):=\sum_{\nu\in\Zz+\frac{1}{2}}e^{2\pi i\nu \left(z+\frac{1}{2}\right)+\pi i\nu^2\tau}.
\end{align}

Mock theta functions were introduced by Ramanujan in $1920$. 
Ramanujan discovered interesting functions that have asymptotic behavior similar to ``theta functions'' (i.e., modular forms) at roots of unity but are not ``theta functions'', and called these functions mock theta functions.  
Starting with Watson \cite{W}, modular transformation laws of mock theta functions have been studied for a long time, and were finally resolved by Zwegers \cite{Z}. 
% The studies of modular transformation laws of mock theta functions were eventually given a resolution by Zwegers \cite{Z}. 
One of Zwegers' greatest results is the introduction of the $\mu$-function, which later gave a prototype for harmonic Maa{\ss}-Jacobi forms, to uniformly capture the modular transformation laws of mock theta functions
\begin{align*}
\mu(u,v\mid \tau)=\mu(u,v):=\frac{e^{\pi iu}}{\vartheta(v)}\sum_{n\in\Zz}\frac{(-1)^ne^{2\pi inv+\pi in(n+1)\tau}}{1-e^{2\pi iu+2\pi in\tau}},\quad u,v\not\in\Lambda_\tau:=\{m\tau+n\in\Cz;m,n\in\Zz\}.
\end{align*}
Zwegers showed that the $\mu$-function has many interesting symmetric properties. 
As a property of particular importance, the $\mu$-function gives a function $\tilde{\mu}$ satisfying a transformation law like Jacobi forms by adding an appropriate non-holomorphic function to the $\mu$-function: 
\begin{align}
   \tilde{\mu}(u,v\mid\tau)
   &:=
   \mu(u,v\mid\tau)+\frac{i}{2}R(u-v\mid\tau), \nonumber\\
\label{mu modular}
\tilde{\mu}(u,v\mid\tau)&=e^{\frac{\pi i}{4}}\tilde{\mu}(u,v\mid\tau+1),\quad \tilde{\mu}(u,v\mid\tau)=\frac{i}{\sqrt{-i\tau}}e^{-\pi i\frac{(u-v)^2}{\tau}}\tilde{\mu}\left(\frac{u}{\tau},\frac{v}{\tau}\mid-\frac{1}{\tau}\right),
\end{align}
where
\begin{align*}
R(u\mid \tau)
   &:=
   \sum_{\nu\in\Zz+\frac{1}{2}}\left\{{\rm sgn}(\nu)-E((\nu+a)\sqrt{2t})\right\}(-1)^{\nu-\frac{1}{2}}e^{-2\pi i\nu u}q^{-\nu^2}, \quad
   E(x)
   :=
   2\int_0^x e^{-\pi z^2}dz
\end{align*}
and $t={\rm Im}(\tau), a=\frac{{\rm Im}(u)}{{\rm Im}(\tau)}$. 
This result made an epoch in the study of mock modular forms. 
Since the $\mu$-function is generalization of Ramanujan's mock theta functions, nowadays the mock theta functions are defined as a holomorphic part of harmonic Maa{\ss}  forms \cite[Definition 5.16]{BFOR}.

%The function $\mu(u,v;\alpha)$ is a one-parameter extension of Zwegers' $\mu$-function introduced in \cite{Z} since it coincides with Zwegers' $\mu$-function when $\alpha=1$. 
%The $\mu$-function is an extension of Ramanujan's mock theta function, and Zwegers showed that the $\mu$-function satisfies a transformation law like Jacobi forms by adding an appropriate non-holomorphic function to the $\mu$-function. 
%This result was a pioneering work in the study of mock modular forms (for details, see \cite{BFOR}).
%The function $\mu(u,v;\alpha)$ also satisfies a transformation law like Jacobi forms by adding an appropriate non-holomorphic function, 

Recently, Shibukawa and the author introduced a natural extension of the $\mu$-function in terms of $q$-difference equations \cite[Definition 1.1]{ST}. 
\begin{align}
&\mu(u,v;\alpha\mid\tau)=\mu(u,v;\alpha):=\frac{e^{\pi i\alpha(u-v)}}{\vartheta(v)}\sum_{n\in\Zz}(-1)^ne^{2\pi i(n+\frac{1}{2})v}e^{\pi in(n+1)\tau}\prod_{j=1}^\infty\frac{1-e^{2\pi iu+2\pi i(n+j)\tau}}{1-e^{2\pi iu+2\pi i(n-\alpha+j)\tau}}. 
\end{align}
Note that the function $\mu(u,v;\alpha)$ is certainly a one parameter extension of the $\mu$-function, since $\mu(u,v;1)=\mu(u,v)$. 
The function $\mu(u,v;\alpha)$ satisfies the $q$-Hermite-Weber equation with respect to the variables $u$ and $v$, and satisfies the $q$-Bessel equation for the valiable $\alpha$ \cite[equations (1.23) and (1.31)]{ST}. 
Also, the function $\mu(u,v;\alpha)$ has many of the same symmetries as the $\mu$-function. 
A particularly interesting point is that the function $\mu(u,v;\alpha)$ for $\alpha=n\in\Zz_{\geq 0}$ is expressed using the $\mu$-function as follows:
\begin{align*}
\mu(u,v;n+1)
   =
   F_{n+1}(u-v\mid q^2)\mu(u,v)-iq^{-\frac{1}{4}}\sum_{l=1}^n\frac{q^{2l}}{(q^2)_l}F_{n-l+1}(u-v\mid q^2)H_{l-1}(u-v\mid q^2), 
\end{align*}
where 
\begin{align*}
H_n(u\mid q):=\sum_{l=0}^n\frac{(q)_n}{(q)_l(q)_{n-l}}e^{\pi i(n-2l)u},\quad F_{n+1}(u\mid q):=(-1)^nq^\frac{n(n+1)}{2}\sum_{l=0}^n\frac{q^{l(l-n)}}{(q)_{n-l}(q)_l}e^{\pi i(n-2l)u}. 
\end{align*}
That is, the function $\mu(u,v;n)$ is essentially equivalent to the original $\mu$-function, and gives its modular completion $\tilde{\mu}(u,v\mid n)$ by adding an appropriate non-holomorphic function
\begin{align*}
\frac{i}{2}R_n(u-v\mid q):=\frac{i}{2}F_n(u-v\mid q^2)R(u-v;\tau)+iq^{-\frac{1}{4}}\sum_{l=1}^{n-1}\frac{q^{2l}}{(q^2)_{l}}F_{n-l}(u-v\mid q^2)H_{l-1}(u-v\mid q^2)
\end{align*}
to the function $\mu(u,v;n)$. 
In this paper, we present the function $\mu(u,v;\alpha)$ is closely related to a discrete Painlev\'{e} equation. 

Discrete Painlev\'{e} equations are introduced from various fields such as physics and discrete integrable systems (for examples \cite{BK}, \cite{FIK}, \cite{PS}, \cite{RGH}). 
They were classified by H. Sakai in $2001$ in a geometric framework based on the type of rational surfaces \cite{S}. 
Among the classified equations, multiplicative discrete Painlev\'{e} equations are sometimes called the $q$-Painlev\'{e} equations. 
For example, let $W$ be the extended affine Weyl group $W:={\rm Aut}(A_5^{(1)})\ltimes W((A_2+A_1)^{(1)})$. That is, the group $W$ has generators $s_0,s_1,s_2,r_0,r_1,\iota,\pi$ satisfying the following relations: 
\begin{align*}
s_j^2=(s_js_{j+1})^3=r_k^2=\iota^2=\pi^6=1,\quad \pi s_j=s_{j+1}\pi,\quad r_k\pi=\pi r_{k+1},\quad r_0\iota=\iota r_{1},\quad (j\in\Zz/3\Zz,k\in\Zz/2\Zz). 
\end{align*}
The following representations of the group $W$ is constructed a representation of the group $W$ on the field $K:=\Cz\left(q,x,a_0,a_1,a_2,f_0,f_1,f_2\right)$ by the following actions (for example, see \cite{T}): 
\begin{align*}
s_j(a_j)
    &=
    \frac{1}{a_j},\quad s_j(a_{j+1})=a_ja_{j+1},\quad s_j(a_{j+2})=a_ja_{j+2},\\
 s_j(f_{j+1})&=f_{j+1}\frac{a_j+f_j}{1+a_jf_j},\quad s_j(f_{j+2})=f_{j+2}\frac{1+a_jf_j}{a_j+f_j},\\
r_0(x)&=\frac{q^2}{x},\quad
r_0(f_j)
    =
    \frac{a_ja_{j+1}}{f_{j+2}}\frac{a_ja_{j+2}+a_{j+2}f_j+f_jf_{j+2}}{a_ja_{j+1}+a_jf_{j+1}+f_jf_{j+1}},\quad\\
r_1(x)&=\frac{1}{x},\quad
r_1(f_j)=\frac{1}{a_ja_{j+1}f_{j+1}}\frac{1+a_jf_j+a_ja_{j+1}f_jf_{j+1}}{1+a_{j+2}f_{j+2}+a_ja_{j+2}f_jf_{j+2}},\\
\iota(q)&=\frac{1}{q},\quad \iota(x)=xq,\quad \iota(a_j)=\frac{1}{a_{2j}},\quad \iota(f_j)=f_{2j},\\
\pi(x)&=\frac{q}{x},\quad \pi(a_j)=a_{j+1},\quad \pi(f_j)=\frac{1}{f_{j+1}},\quad (j\in\Zz/3\Zz).
\end{align*}
The group $W$ has a subgroup isomorphic to $\Zz^3$ whose generators are $T:=\pi^3r_0, T_1:=\pi^2s_2s_0,T_2:=\pi^4s_1s_0$.
If $a_0a_1a_2=q,f_0f_1f_2=x^2q$, these generators satisfy the following actions:
\begin{align}
\label{shift}
&T(x,a_1,a_2)=(x q,a_1,a_2),\quad T_1(x,a_1,a_2)=\left(x,a_1q,a_2\right),\quad T_2(x,a_1,a_2)=\left(x,a_1,a_2q\right)\\
\label{tx}
&T(f_j)=a_ja_{j+1}f_{j+1}\frac{1+a_{j+2}f_{j+2}+a_{j+2}a_{j}f_{j+2}f_j}{1+a_jf_j+a_ja_{j+1}f_jf_{j+1}},\quad (j\in\Zz/3\Zz)\\
\label{t1}
&\frac{T_1(f_2)}{f_1}=\frac{1+a_1f_1+a_1^2a_2f_2+a_1a_2f_1f_2}{a_1a_2+a_1f_1+a_1^2a_2f_1+f_1f_2},\quad \frac{T_1^{-1}(f_1)q}{f_2}=\frac{x^2q^2+a_1a_2f_1f_2}{x^2a_1a_2+f_1f_2}\\
\label{t2}
&\frac{T_2(f_1)}{f_2}=\frac{a_!a_2^2+f_1+a_1a_"f_2+a_2f_1f_2}{a_2+a_1a_2f_1+f_2+a_1a_2^2f_1f_2},\quad \frac{T_2^{-1}(f_2)}{f_1q}=\frac{a_1a_2x^2+f_1f_2}{x^2q^2+a_1a_2f_1f_2}
\end{align}
The equations (\ref{shift})-(\ref{t2}) are called the type $(A_2+A_1)^{(1)}$ $q$-Painlev\'{e} equations. 
It is known that the type $(A_2+A_1)^{(1)}$ $q$-Painlev\'{e} equation has $q$-hypergeometric solutions satisfying the $q$-Bessel and $q$-Hermite-Weber equation
\begin{align*}
[axT^2-(1-x)T-1]f(x)=0\quad(\text{$q$-HW eq.}) ,\quad \left[T-\left(q^\frac{\nu}{2}+q^{-\frac{\nu}{2}}\right)T^\frac{1}{2}+\left(1+\frac{x^2}{4}\right)\right]f(x)=0\quad (\text{$q$-B eq.}),
\end{align*}
when $a_0=1, f_0=-1$ (for details, see \cite{KK}, \cite{KMNOY1}, \cite{KMNOY2}, \cite{KNY}, \cite{RGTT}, etc). 

The main result of this paper is the following theorem. 
\begin{thm}
\label{thm:1}
Putting $k,m\in\Cz,n\in\Zz_{\geq0}$ and 
\begin{align}
\label{xi}
\xi_{m,n,k}:=\det\left[\mu(u+v+k\tau,v;m-j-j')\right]_{j,j'=0}^n. 
\end{align}
Then, 
\begin{align}
\label{solution}
(x,a_1,a_2;f_1,f_2)=\left(x,-q^m,q^{-n};\frac{\xi_{m,n,1}}{\xi_{m,n,0}}\frac{\xi_{m,n-1,0}}{\xi_{m,n-1,1}},-\frac{\xi_{m,n-1,1}}{\xi_{m,n-1,0}}\frac{\xi_{m-1,n-1,0}}{\xi_{m-1,n-1,1}}\right)
\end{align}
are solutions of the type $(A_2+A_1)^{(1)}$ $q$-Painlev\'e equation, where $x:=e^{\pi iu}$. 
\end{thm}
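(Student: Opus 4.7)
The overall strategy is the standard \emph{tau-function/bilinear} approach: translate each scalar evolution equation in (\ref{shift})--(\ref{t2}) into a Hirota-type quadratic identity among the determinants $\xi_{m,n,k}$, and derive those identities from Jacobi's determinant identity (the Desnanot--Jacobi / Pl\"{u}cker relation) applied to a suitable bordering of the matrix in~(\ref{xi}), together with the contiguity relations satisfied by its entries.

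First I would fix the dictionary between the $W$-flows and the discrete indices. With $x=e^{\pi iu}$, $a_1=-q^m$, $a_2=q^{-n}$ and $q=e^{\pi i\tau}$, the actions (\ref{shift}) become $T:u\mapsto u+\tau$ (equivalently $k\mapsto k+1$ in every entry of~(\ref{xi})), $T_1:m\mapsto m+1$, and $T_2:n\mapsto n-1$; in particular $T(\xi_{m,n,k})=\xi_{m,n,k+1}$, and the shifts of the ratios in~(\ref{solution}) can be read off directly for each flow. Since $\mu(u,v;\alpha)$ satisfies the $q$-Hermite--Weber equation in $u,v$ and the $q$-Bessel equation in $\alpha$ \cite[(1.23),(1.31)]{ST}, every shifted entry $\mu(u+v+(k+1)\tau,v;\alpha)$ or $\mu(u+v+k\tau,v;\alpha\pm 1)$ admits a two-term $K$-linear expansion in the unshifted entries $\mu(u+v+k\tau,v;\alpha)$ and one neighbour. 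Column and row operations on the matrix in~(\ref{xi}) then allow one to rewrite each of $T(\xi_{m,n,k})$, $T_1(\xi_{m,n,k})$ and $T_2(\xi_{m,n,k})$ as a bordered $(n+2)\times(n+2)$ determinant built from the same base entries.

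Applying Jacobi's identity to opposite corner minors of such a bordered determinant produces quadratic Hirota-type relations of the form $\xi_{\cdots}\,\xi_{\cdots}-\xi_{\cdots}\,\xi_{\cdots}=\xi_{\cdots}\,\xi_{\cdots}$ among the $\xi_{m,n,k}$ at various integer translates of $(m,n,k)$. The last step is to substitute the ansatz~(\ref{solution}) into each of the coupled equations (\ref{tx})--(\ref{t2}), clear the common $\xi$-denominator, and identify the resulting polynomial identity with an appropriate sum of these Jacobi-type bilinear relations. The normalisation constraints $a_0a_1a_2=q$ and $f_0f_1f_2=x^2q$ are then checked on the base case $n=0$, where $\xi_{m,0,k}=\mu(u+v+k\tau,v;m)$ and the formula reduces to the known $q$-hypergeometric situation mentioned just before the theorem.

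The main obstacle will be the combinatorial matching in the penultimate step. Because the three indices $m,n,k$ all shift simultaneously in (\ref{tx})--(\ref{t2}), and the prefactors $e^{\pi i\alpha(u-v)}/\vartheta(v)$ in the definition of $\mu(u,v;\alpha)$ pick up non-trivial gauge factors under each shift, one must choose the bordering row/column so that Jacobi's identity produces precisely the six terms on the right-hand sides of (\ref{t1}) and (\ref{t2}) with their prescribed rational coefficients in $a_1,a_2,x$, rather than any spurious combination. Tracking these gauge factors through the contiguity relations of $\mu(u,v;\alpha)$, and verifying that they collapse to the correct $q$-powers of $-q^m$ and $q^{-n}$ after the substitution $a_1=-q^m$, $a_2=q^{-n}$, is the most delicate part of the argument.
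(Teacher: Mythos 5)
Your proposal follows essentially the same route as the paper: use the contiguity relations of $\mu(u,v;\alpha)$ from \cite{ST} to rewrite the shifted determinants $\xi_{m,n,k\pm1}$, $\xi_{m\pm1,n,k}$, $\xi_{m,n\pm1,k}$ as bordered determinants in a common set of entries, extract Hirota-type bilinear relations via Pl\"{u}cker/Jacobi identities, and then verify by direct rational manipulation that the ratios in (\ref{solution}) satisfy the flows $T,T_1,T_2$ under the dictionary $T:k\mapsto k+1$, $T_1:m\mapsto m+1$, $T_2:n\mapsto n-1$. This is precisely the two-step structure of the paper's Theorems \ref{bi linear} and \ref{tau thm}, including the delicate bookkeeping of $x$- and $q$-prefactors that you correctly identify as the main labor.
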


Note that the parameter $v$ is an independent parameter of the equation, so the solution $(\ref{solution})$ is a one parameter family of solutions of the type $(A_2+A_1)^{(1)}$ $q$-Painlev\'{e} equation. 
Furthermore, we obtain the following remark by applying the action $s_2$ to the solution $(\ref{solution})$. 
\begin{rmk}
Putting $k,m\in\Cz,n\in\Zz_{\geq0}$, then 
\begin{align*}
(x,a_0,a_2;f_0,f_2)=\left(x,-q^{-m},q^{n+1};\frac{\xi_{m,n,1}\xi_{m,n-1,0}}{\xi_{m,n,0}\xi_{m,n-1,1}},-\frac{\xi_{m+1,n,1}\xi_{m,n,0}}{\xi_{m+1,n,0}\xi_{m,n,1}}\right)
\end{align*}
are solutions of the type $(A_2+A_1)^{(1)}$ $q$-Painlev\'e equation. 
\end{rmk}
The $q$-hypergeometric solutions of the $q$-Painlev\'{e} equations are obtained from rewriting the equations as a kind of Riccati type $q$-difference equations
\begin{align*}
f(xq)=\frac{af(x)+b}{cf(x)+d}
\end{align*}
by specializing the parameters, and linearizing them. 
%In particular, the type $(A_2+A_1)^{(1)}$ $q$-Painlev\'{e} equation corresponds to the $q$-Hermite-Weber equation and the $q$-Bessel equation.
Since the function $\mu(u,v;\alpha)$ satisfies the $q$-Hermite-Weber equation and $q$-Bessel equation, it is expected that the function $\mu(u,v;\alpha)$ is related to a solution of the type $(A_2+A_1)^{(1)}$ $q$-Painlev\'{e} equation.
The fact stated in Theorem $\ref{thm:1}$ is a novel claim and an important result that connects discrete integrable systems and the theory of mock modular forms. 

In the next section, we present and prove bilinear relations satisfied the function $\xi_{m,n,k}$. 
Then, we prove Theorem \ref{thm:1}, since the relations satisfied by the function $\xi_{m,n,k}$ coincide with bilinear relations satisfied by the $\tau$-function of the type $(A_2+A_1)^{(1)}$ $q$-Painlev\'{e} equation. 

%%%%%%%%%%%%%%%%%%%%%%%%%%%%%%%%%%%%%%%%%%%%%%%%%%%%%%%%%%%%%%%%%%%%%%%%%%%%%%%%%%%%%%%%
\section{Proof of the main result}
\label{section 3}
%%%%%%%%%%%%%%%%%%%%%%%%%%%%%%%%%%%%%%%%%%%%%%%%%%%%%%%%%%%%%%%%%%%%%%%%%%%%%%%%%%%%%%%%
In this section, we prove Theorem $\ref{thm:1}$. 
First, we present bilinear relations satisfied by the function $\xi_{m,n,k}$.

\begin{thm}
\label{bi linear}
The functions $\xi_{m,n,k}$ defined by equation $(\ref{xi})$ satisfy the following bilinear equations: 
\begin{align}
\label{1}
& x^2q^{m+2k}\xi_{m-1,n-1,k+1}\xi_{m,n,k}-x q^{m-n+k}\xi_{m,n-1,k+1}\xi_{m-1,n,k}+q^n\xi_{m,n,k+1}\xi_{m-1,n-1,k}=0\\
\label{2}
& x^2 q^{2n+2k-m}\xi_{m-1,n-1,k}\xi_{m,n,k-1}-x q^k\xi_{m-1,n,k}\xi_{m,n-1,k-1}+q^n\xi_{m,n,k}\xi_{m-1,n-1,k-1}=0\\
\label{3}
& x q^{m+k-n}\xi_{m-1,n-2,k}\xi_{m,n,k-1}-q^{n}\xi_{m-1,n-1,k}\xi_{m,n-1,k-1}+\xi_{m,n-1,k}\xi_{m-1,n-1,k-1}=0\\
\label{4}
& x\xi_{m-1,n-1,k+1}\xi_{m,n-1,k}-xq^{n}\xi_{m,n-1,k+1}\xi_{m-1,n-1,k}+q^{m-n-k}\xi_{m-1,n-2,k}\xi_{m,n,k+1}=0\\
\label{5}
& x^2 q^{2k+m}\xi_{m-2,n-1,k+1}\xi_{m,n,k}-x q^{k+m-n}\xi_{m-1,n-1,k+1}\xi_{m-1,n,k}+\xi_{m,n,k+1}\xi_{m-2,n-1,k}=0\\
\label{6}
& x^2 q^{2k-m}\xi_{m-2,n-1,k}\xi_{m,n,k-1}-x q^{k-n}\xi_{m-1,n,k}\xi_{m-1,n-1,k-1}+\xi_{m,n,k}\xi_{m-2,n-1,k-1}=0\\
\label{7}
& x q^{k+2m-n}\xi_{m,n,k}\xi_{m-1,n-2,k}-q^n\xi_{m,n-1,k}\xi_{m-1,n-1,k}+\xi_{m,n-1,k+1}\xi_{m-1,n-1,k-1}=0\\
\label{8}
& xq^k\xi_{m,n-1,k-1}\xi_{m-1,n-1,k+1}-xq^{n+k}\xi_{m-1,n-1,k}\xi_{m,n-1,k}+q^{2m-n}\xi_{m,n,k}\xi_{m-1,n-2,k}=0.
\end{align}
\end{thm}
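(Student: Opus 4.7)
The plan is to combine three-term contiguous relations for the function $\mu(u,v;\alpha)$ with the classical Desnanot--Jacobi (Plücker) identity for determinants. The key structural observation is that $\xi_{m,n,k}$ is a Hankel-type determinant: the $(j,j')$ entry depends on $j$ and $j'$ only through the sum $j+j'$, so the $n+1$ columns of the defining matrix are consecutive translates of a single vector sequence $c^{(k)}_l := (\mu(u+v+k\tau,v;l-i))_{i=0}^n$ in the index $l$. The eight identities (\ref{1})--(\ref{8}) have the shape of tau-function Hirota equations, which is exactly what Plücker relations on such a translation sequence produce.

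First I would establish the necessary contiguous relations. Since $\mu(u,v;\alpha)$ satisfies a $q$-Bessel equation in the parameter $\alpha$ and a $q$-Hermite--Weber equation in $u$ (see \cite{ST} and the remarks in Section~1), three-term recurrences linking the values $\mu(u+v+(k\pm 1)\tau,v;\alpha)$ and $\mu(u+v+k\tau,v;\alpha\pm 1)$ are available. They can be derived either from the $q$-difference equations themselves or directly from the defining series, and their coefficients are Laurent monomials in $x=e^{\pi i u}$ and powers of $q$; these are precisely the coefficients appearing in (\ref{1})--(\ref{8}).

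Once these recurrences are in hand, each bilinear equation should follow by the same mechanism: express a single column (or row) of an appropriate $\xi$-determinant as a linear combination of columns from neighbouring shifts via the contiguous relations, then apply the Desnanot--Jacobi identity
\begin{align*}
\det(A)\det\bigl(A^{i,j}_{k,l}\bigr) = \det\bigl(A^{i}_{k}\bigr)\det\bigl(A^{j}_{l}\bigr) - \det\bigl(A^{i}_{l}\bigr)\det\bigl(A^{j}_{k}\bigr)
\end{align*}
to a suitable $(n+2)\times(n+2)$ ambient Hankel matrix (or equivalently invoke a short Plücker relation on the Grassmannian spanned by the $c^{(k)}_l$). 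Substituting the contiguous coefficients into the resulting quadratic identity among six minors reproduces one of (\ref{1})--(\ref{8}).

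The main obstacle is, for each of the eight relations, selecting the correct three-term contiguity and the correct ambient Hankel matrix so that every coefficient matches on the nose. The three types of shift act differently on the determinant: shifts in $m$ translate the column indices within the Hankel frame, shifts in $n$ change its size, and shifts in $k$ uniformly relocate the underlying entries. Consequently a small library of distinct contiguous relations must be assembled, and each of (\ref{1})--(\ref{8}) requires its own (though uniform in spirit) application of Desnanot--Jacobi. Once the right ingredients are identified, each identity reduces to routine bookkeeping; the real work is in constructing that library and matching each relation to its correct determinantal template.
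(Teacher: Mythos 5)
Your strategy is the paper's strategy for the first six identities: the paper uses exactly the contiguous relations you anticipate, namely (\ref{xuad}) and (\ref{xdad}), to rewrite $\xi_{m-1,n-1,\pm1}$ as a bordered Hankel determinant in the entries $\nu_j=\mu(u+v,v;m-j)$ with an extra row $(1,x^{\pm1},\dots,x^{\pm n})$, reduces to $k=0$ by the substitution $x\mapsto xq^k$, and then kills each of (\ref{1})--(\ref{6}) by comparing coefficients of $x^l$ and invoking Pl\"ucker relations on the resulting products of minors of one ambient Hankel frame. So for (\ref{1})--(\ref{6}) your proposal is essentially the same proof, modulo the bookkeeping you defer.

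The gap is in (\ref{7}) and (\ref{8}). These two identities contain the product $\xi_{m,n-1,k+1}\xi_{m-1,n-1,k-1}$ (resp.\ $\xi_{m,n-1,k-1}\xi_{m-1,n-1,k+1}$), i.e.\ the two factors sit at $k+1$ and $k-1$ simultaneously. After applying the contiguous relations, one factor acquires the border row $(x^j)_j$ and the other the border row $(x^{-j})_j$, so the three terms of (\ref{7}) or (\ref{8}) are not minors of a single bordered frame, and no one application of Desnanot--Jacobi (or of a single short Pl\"ucker relation) produces them: neither (\ref{7}) nor (\ref{8}) is itself a Pl\"ucker relation. The paper needs an additional device here: the identity (\ref{00}), which expresses $(x^{-1}-x)\xi_{m-1,n-1,0}$ as a determinant bordered by \emph{two} rows $(x^j)_j$ and $(x^{-j})_j$. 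Pl\"ucker relations on that doubly bordered frame yield two auxiliary bilinear identities in which the combinations $\xi_{m-1,n-1,1}\xi_{m,n-1,-1}$ and $\xi_{m,n-1,1}\xi_{m-1,n-1,-1}$ appear linearly, and (\ref{7}) and (\ref{8}) are then extracted by inverting the $2\times2$ matrix $\left[\begin{smallmatrix}1 & -1\\ x & -x^{-1}\end{smallmatrix}\right]$ relating them to the $k=0$ products. Your uniform ``one contiguity, one ambient Hankel matrix, one Desnanot--Jacobi per identity'' template would not find this step; you would need to recognize that (\ref{7}) and (\ref{8}) must be obtained as a linear combination of two Pl\"ucker relations rather than as one.
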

\begin{proof}
We first show the case $k=0$ (or $k=\pm1$) and then prove to put $x\mapsto xq^k$. Putting $\nu_j:=\mu(u+v,v;m-j)$. 
Since the function $\mu(u,v;\alpha)$ satisfies the following relations \cite{ST}: 
\begin{align}
\label{xuad}
&q^{-\alpha}\mu(u+v+\tau,v;\alpha)+x^2\mu(u+v,v;\alpha)-x\mu(u+v,v;\alpha-1)=0\\
\label{xdad}
&\mu(u+v,v;\alpha)+x^2q^{-\alpha}\mu(u+v-\tau,v;\alpha)-x\mu(u+v,v;\alpha-1)=0, 
\end{align}
we obtain 
\begin{align}
\xi_{m-1,n-1,\pm1}=x^{\pm n}q^{n(m-n)}\det
\begin{bmatrix}
       1       & x^{\pm 1}  & \ldots &       x^{\pm n}     \\
 \nu_{1} & \nu_{2} & \ldots & \nu_{n+1} \\
 \nu_{2} & \nu_{3} & \ldots & \nu_{n+2}\\
\vdots & \vdots & \ddots & \vdots \\
 \nu_{n} & \nu_{n+1} & \ldots & \nu_{2n}
\end{bmatrix}.
\end{align}
Hence we have
\begin{align*}
\text{{\rm LHS} of $(\ref{1})_{k=0}$}=&x^{n+1}q^{mn-n^2+m}\det
\begin{bmatrix}
 x          & x^2 & \ldots &       x^{n+1}     \\
\nu_{1}      & \nu_{2} & \ldots & \nu_{n+1} \\
\vdots & \vdots & \ddots & \vdots \\
\nu_{n} & \nu_{n+1} & \ldots & \nu_{2n}
\end{bmatrix}\det[\nu_{j+j'}]_{j,j'=0}^n\\
&-x^{n+1}q^{mn-n^2+m}\det
\begin{bmatrix}
 1     & x  & \ldots &       x^n     \\
\nu_0 & \nu_{1} & \ldots & \nu_{n} \\
\vdots & \vdots & \ddots & \vdots \\
\nu_{n-1} & \nu_{n} & \ldots & \nu_{2n-1}
\end{bmatrix}\det[\nu_{j+j'+1}]_{j,j'=0}^n\\
&+x^{n+1}q^{mn-n^2+m}\det
\begin{bmatrix}
 1          & x  & \ldots &       x^{n+1}     \\
\nu_0   & \nu_{1} & \ldots & \nu_{n+1} \\
\vdots & \vdots & \ddots & \vdots \\
\nu_{n} & \nu_{n+1} & \ldots & \nu_{2n+1}
\end{bmatrix}\det[\nu_{j+j'+1}]_{j,j'=0}^{n-1}.
\end{align*}
Comparing the coefficients of $x^l$, we obtain $\text{{\rm LHS} of $(\ref{1})_{k=0}$}=0$ using the Pl\"{u}cker relation. From similar calculations, we also obtain equations $(\ref{2})$--$(\ref{6})$. 

Next, note that 
\begin{align}
\label{00}
(x^{-1}-x)\xi_{m-1,n-1,0}=q^{2n(m-n-1)}\det
\begin{bmatrix}
1 & x & \ldots & x^{n+1}\\
1 & x^{-1} & \ldots & x^{-n-1}\\
\nu_1 & \nu_2 & \ldots & \nu_{n+2}\\
\vdots & \vdots & \ddots & \vdots \\
\nu_n & \nu_{n+1} & \ldots & \nu_{2n+1}
\end{bmatrix}. 
\end{align}
In fact, since 
\begin{align*}
\xi_{m-1,n-1,0}&=\left(\frac{x}{q}\right)^nq^{n(m-n)}\det
\begin{bmatrix}
1 & \ldots & (x/q)^n\\
\mu(u+v-\tau,v;m-1) & \ldots & \mu(u+v-\tau,v;m-n-1)\\
\vdots & \ddots & \vdots \\
\mu(u+v-\tau,v;m-n) & \ldots & \mu(u+v-\tau,v;m-2n)
\end{bmatrix}\\
&=q^{2n(m-n-1)}\det
\begin{bmatrix}
1 & \ldots & x^n\\
\nu_2-\frac{\nu_1}{x} & \ldots & \nu_{n+2}-\frac{\nu_n+1}{x}\\
 & \ldots & \\
\nu_{n+1}-\frac{\nu_n}{x} & \ldots & \nu_{2n+1}-\frac{\nu_{2n}}{x}
\end{bmatrix}, 
\end{align*}
we put $X_j:=\sum_{k=0}^jx^{2k-j}$ which implies $X_j-x^{-1}X_{j-1}=x^j$, so we have
\begin{align*}
\xi_{m-1,n-1,0}&=q^{2n(m-n-1)}\det
\begin{bmatrix}
X_0-0 & X_1-\frac{X_0}{x} & \ldots & X_n-\frac{X_{n-1}}{x}\\
\nu_2-\frac{\nu_1}{x} & \nu_3-\frac{\nu_2}{x} & \ldots & \nu_{n+2}-\frac{\nu_{n+1}}{x}\\
\vdots & \vdots & \ddots & \vdots \\
\nu_{n+1}-\frac{\nu_n}{x} & \nu_{n+2}-\frac{\nu_{n+1}}{x} & \ldots & \nu_{2n+1}-\frac{\nu_{2n}}{x}
\end{bmatrix}\\
 &=
q^{2n(m-n-1)}\sum_{j=0}^{n+1}\det
\begin{bmatrix}
0 & -X_0/x & \ldots & -X_{j-2}/x & X_{j} & \ldots & X_n\\
-\nu_1/x & -\nu_2/x & \ldots & -\nu_{j}/x & \nu_{j+2} & \ldots & \nu_{n+2}\\
\vdots & \vdots & \ddots & \vdots & \vdots & \ddots & \vdots \\
-\nu_n/x & -\nu_{n+1}/x & \ldots & -\nu_{n+j-1}/x & \nu_{n+j+1} & \ldots & \nu_{2n+1}
\end{bmatrix}\\
&=q^{2n(m-n-1)}\det
\begin{bmatrix}
1 & x^{-1} & \ldots & x^{-n-1}\\
0 & X_0 & \ldots & X_n\\
\nu_1 & \nu_2 & \ldots & \nu_{n+2}\\
\vdots & \vdots & \ddots & \vdots \\
\nu_n & \nu_{n+1} & \ldots & \nu_{2n+1}
\end{bmatrix}\\
&=\frac{q^{2n(m-n-1)}}{x-x^{-1}}\det
\begin{bmatrix}
1 & x^{-1} &\ldots & x^{-n-1}\\
0 & x-x^{-1} & \ldots & x^{n+1}-x^{-n-1}\\
\nu_1 & \nu_2 & \ldots & \nu_{n+2}\\
\vdots & \vdots & \ddots & \vdots \\
\nu_n & \nu_{n+1} & \ldots & \nu_{2n+1}
\end{bmatrix}. 
\end{align*}
Therefore, the equation $(\ref{00})$ holds. Using the equation $(\ref{00})$, we have 
\begin{align*}
    &\xi_{m,n-1,1}\xi_{m-1,n-1,-1}-\xi_{m-1,n-1,1}\xi_{m,n-1,-1}+(x-x^{-1})q^{2m-n}\xi_{m,n,0}\xi_{m-1,n-2,0}\\
    &=
    q^{n(2m-2n+1)}\det
\begin{bmatrix}
1 & \ldots & x^n\\
\nu_0 & \ldots & \nu_n\\
 & \ldots & \\
 \nu_{n-1} & \ldots & \nu_{2n-1}
\end{bmatrix}\det
\begin{bmatrix}
1 & \ldots & x^{-n}\\
\nu_1 & \ldots & \nu_{n+1}\\
 & \ldots & \\
\nu_n & \ldots & \nu_{2n}
\end{bmatrix}\\
    &\quad-q^{n(2m-2n+1)}\det
\begin{bmatrix}
1 & \ldots & x^n\\
\nu_1 \ldots & \nu_{n+1}\\
 & \ldots & \\
\nu_n & \ldots & \nu_{2n}
\end{bmatrix}\det
\begin{bmatrix}
1 & \ldots & x^{-n}\\
\nu_0 & \ldots & \nu_n\\
 & \ldots & \\
\nu_{n-1} & \ldots & \nu_{2n-1}
\end{bmatrix}+(x-x^{-1})q^{2m-n}\xi_{m,n,0}\xi_{m-1,n-2,0}\\
     &=
     q^{n(2m-2n+1)}\det
\begin{bmatrix}
\nu_0 & \ldots & \nu_n\\
 & \ldots & \\
\nu_n & \ldots & \nu_{2n} 
\end{bmatrix}\det
\begin{bmatrix}
1 & \ldots & x^n\\
1 & \ldots & x^{-n}\\
\nu_1 & \ldots & \nu_{n+1}\\
 & \ldots & \\
\nu_{n-1} & \ldots & \nu_{2n-1}
\end{bmatrix}-(x^{-1}-x)q^{2m-n}\xi_{m,n,0}\xi_{m-1,n-2,0}\\
    &=0, 
\end{align*}
and similar calculations, we have
\begin{align*}
&x^{-1}\xi_{m,n-1,1}\xi_{m-1,n-1,-1}-x\xi_{m-1,n-1,1}\xi_{m,n-1,-1}+(x-x^{-1})q^n\xi_{m,n-1,0}\xi_{m-1,n-1,0}=0. 
\end{align*}
Therefore, From 
\begin{align*}
\begin{bmatrix}
1 & -1\\
x & -x^{-1}
\end{bmatrix}
\begin{bmatrix}
\xi_{m-1,n-1,1}\xi_{m,n-1,-1}\\
\xi_{m,n-1,1}\xi_{m-1,n-1,-1}
\end{bmatrix}=(x-x^{-1})
\begin{bmatrix}
\xi_{m,n,0}\xi_{m-1,n-2,0}\\
q^n\xi_{m,n-1,0}\xi_{m-1,n-1,0}
\end{bmatrix}, 
\end{align*}
we have equations $(\ref{7})$ and $(\ref{8})$. 
\end{proof}

Next, we present the following fact that solutions of the type $(A_2+A_1)^{(1)}$ $q$-Painlev\'{e} equation are constructed by using functions satisfying certain bilinear forms. 
The following theorem have been implicitly known since a long time ago (for examples, \cite{KNY2}, \cite{T}, \cite{N}). 
There does not seem to be any paper that gives full details of this, so we give them properly in this paper. 

\begin{thm}
\label{tau thm}
Suppose that a function $\xi=\xi(a_1,a_2,x)$ satisfies the following bilinear relations:
\begin{align}
\label{11}
0
    &=
     \frac{q^{a-b-c}}{a_1a_2}\txi{a+1,b+1,c}\txi{a,b,c+1}-x^2q^c\txi{a+1,b+1,c+1}\txi{a,b,c}+a_2xq^b\txi{a+1,b,c}\txi{a,b+1,c+1}
     \\
\label{12}
    0&=
    \frac{x^2q^{a-b+2c}}{a_1a_2}\txi{a+1,b+1,c}\txi{a,b,c-1}-\txi{a+1,b+1,c-1}\txi{a,b,c}+a_2xq^{b+c}\txi{a+1,b,c}\txi{a,b+1,c-1}
    \\
\label{13}
    0&=
    a_1a_2xq^{b+c}\txi{a+1,b+2,c}\txi{a,b,c-1}-q^a\txi{a+1,b+1,c-1}\txi{a,b+1,c}+\frac{q^{a-b}}{a_2}\txi{a+1,b+1,c}\txi{a,b+1,c-1}
    \\
\label{14}
    0&=
    a_1a_2q^{b-c}\txi{a+1,b+2,c}\txi{a,b,c+1}-xq^{a}\txi{a+1,b+1,c+1}\txi{a,b+1,c}+\frac{xq^{a-b}}{a_2}\txi{a+1,b+1,c}\txi{a,b+1,c+1}
    \\
\label{15}
    0&=
    q^{a-c}\txi{a+2,b+1,c}\txi{a,b,c+1}-a_1x^2q^c\txi{a+2,b+1,c+1}\txi{a,b,c}+a_1a_2xq^b\txi{a+1,b+1,c+1}\txi{a+1,b,c}
    \\
\label{16}
    0&=
    x^2q^{a+c}\txi{a+2,b+1,c}\txi{a,b,c-1}-a_1q^{-c}\txi{a+2,b+1,c-1}\txi{a,b,c}+a_1a_2xq^{b}\txi{a+1,b+1,c-1}\txi{a+1,b,c}
    \\
\label{17}
    0&=
    a_1a_2x\txi{a,b,c}\txi{a+1,b+2,c}-\frac{q^{2a-2b-c}}{a_1a_2}\txi{a+1,b+1,c}\txi{a,b+1,c}+\frac{q^{2a-b-c}}{a_1}\txi{a,b+1,c+1}\txi{a+1,b+1,c-1}
    \\
\label{18}
    0&=
    a_1a_2\txi{a,b,c}\txi{a+1,b+2,c}-\frac{xq^{2a-2b+c}}{a_1a_2}\txi{a+1,b+1,c}\txi{a,b+1,c}+\frac{xq^{2a+2b+c}}{a_1}\txi{a,b+1,c-1}\txi{a+1,b+1,c+1}
\end{align}
where $\txi{a,b,c}:=T_1^{-a}T_2^bT^c\xi$. 
Then, the pair
\begin{align*}
(x,a_1,a_2;f_1,f_2)=\left(x,a_1,a_2;\frac{\txi{0,0,1}\txi{0,1,0}}{\txi{0,0,0}\txi{0,1,1}},-\frac{\txi{0,1,1}\txi{1,1,0}}{\txi{0,1,0}\txi{1,1,1}}\right)
\end{align*}
gives a solution of the type $(A_2+A_1)^{(1)}$ $q$-Painlev\'{e} equation $(\ref{shift})$-$(\ref{t2})$.
\end{thm}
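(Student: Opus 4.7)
The plan is to substitute the proposed expressions for $f_1$, $f_2$ (together with $f_0$ determined by the constraint $f_0f_1f_2 = x^2q$) into each of the evolutions (\ref{tx})--(\ref{t2}), and then reduce every resulting identity to one of the bilinear relations (\ref{11})--(\ref{18}). The point is that once the $f_j$'s are written as ratios of $\tilde{\xi}$'s, every Painlev\'e equation becomes a three-term bilinear identity among $\tilde{\xi}$'s and must match, up to a monomial prefactor in $x, a_1, a_2, q$, exactly one of the eight given relations.

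First I would make $f_0$ explicit as
$f_0 = -\dfrac{x^2 q\,\tilde{\xi}_{0,0,0}\tilde{\xi}_{1,1,1}}{\tilde{\xi}_{0,0,1}\tilde{\xi}_{1,1,0}}$.
Because $T_1^{-1}$, $T_2$, $T$ shift the indices $a,b,c$ of $\tilde{\xi}_{a,b,c}$ by $+1$ respectively, the images $T(f_j)$, $T_1^{\pm1}(f_j)$, $T_2^{\pm1}(f_j)$ are all explicit ratios of $\tilde{\xi}$'s with shifted indices. I would then clear denominators in each Painlev\'e equation after substitution; the outcome is a sum of three monomials in products $\tilde{\xi}\cdot\tilde{\xi}$, which by inspection matches one of (\ref{11})--(\ref{18}). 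The natural pairing should be: the two halves of (\ref{tx}) for $j=1$ reduce to (\ref{11}) and (\ref{12}); the forward/backward $T_1$-evolutions in (\ref{t1}) correspond to (\ref{13})--(\ref{14}) and to (\ref{15})--(\ref{16}); and the two halves of (\ref{t2}) correspond to (\ref{17})--(\ref{18}). The $j=0,2$ cases of (\ref{tx}) should follow from the $j=1$ case by the same relations with the indices of $\tilde{\xi}$ shifted by one of $T_1, T_2$.

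The main obstacle is purely combinatorial bookkeeping: each Painlev\'e evolution contains the trinomial combinations $1 + a_j f_j + a_j a_{j+1} f_j f_{j+1}$, which upon substitution become linear combinations of three $\tilde{\xi}\cdot\tilde{\xi}$ products, and I have to use $a_0 a_1 a_2 = q$ and the sign chosen for $f_2$ consistently so that the coefficients line up with those in (\ref{11})--(\ref{18}). Once the matching table between the evolutions and the eight bilinears is established, the proof for each case reduces to verifying a single monomial prefactor, so the content of the theorem is entirely carried by the bilinear relations themselves; the difficulty is confined to confirming, by direct shift-index manipulation, that each Painlev\'e identity is indeed an instance of one relation in the list.
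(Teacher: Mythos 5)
Your overall strategy---substitute the $\tilde{\xi}$-ratios for $f_1,f_2$ (and $f_0$ via the constraint) into the evolution equations and reduce everything to the bilinear relations---is the same as the paper's. But the pivotal step as you state it is false, and it is precisely where all the work of the proof lives. With $f_1=\txi{0,0,1}\txi{0,1,0}/(\txi{0,0,0}\txi{0,1,1})$ and $f_2=-\txi{0,1,1}\txi{1,1,0}/(\txi{0,1,0}\txi{1,1,1})$, a trinomial such as $1+a_1f_1+a_1a_2f_1f_2$ becomes, over the common denominator $\txi{0,0,0}\txi{0,1,1}\txi{1,1,1}$, the numerator $\txi{0,0,0}\txi{0,1,1}\txi{1,1,1}+a_1\txi{0,0,1}\txi{0,1,0}\txi{1,1,1}-a_1a_2\txi{0,0,1}\txi{1,1,0}\txi{0,1,1}$: a sum of three products of \emph{three} $\tilde{\xi}$'s, not of two. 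The identity to be verified for, say, $T(f_1)$ equates two ratios of such cubic trinomials and is therefore \emph{not} an instance of any single relation (\ref{11})--(\ref{18}) up to a monomial prefactor, nor does it become one after clearing denominators. Your proposed one-to-one matching table (the two halves of (\ref{tx}) giving (\ref{11}) and (\ref{12}), etc.) does not exist.

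What is actually required is an iterated factorization: group two of the three cubic terms, pull out a common $\tilde{\xi}$, use one bilinear relation to collapse the remaining quadratic binomial into a monomial times a single product of two $\tilde{\xi}$'s, recombine with the third term, and apply a further bilinear relation---repeating until numerator and denominator each telescope. In the paper, the verification of $f_1T(f_1)$ chains (\ref{12}), (\ref{13}), (\ref{17}) together with an auxiliary identity (its equation (\ref{110})) that must first be \emph{derived} by combining (\ref{11}), (\ref{14}) and (\ref{17}); the backward $T_1$- and $T_2$-actions on the other coordinate each chain four of the eight relations. Only the two Riccati-like halves $T_1^{-1}(f_1)q/f_2$ and $T_2^{-1}(f_2)/(qf_1)$ reduce quickly, because there the trinomials degenerate to binomials such as $x^2q^2+a_1a_2f_1f_2$, and even then one relation is needed for the numerator and another for the denominator. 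So the content of the theorem is not ``entirely carried by the bilinear relations themselves'' in the sense of a one-line matching; the nontrivial part is exhibiting the chain of substitutions (including the derived identity (\ref{110})) that makes each rational identity collapse, and your proposal omits this entirely.
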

\begin{rmk}
If $\xi$ is a solution of the bilinear relations $(\ref{11})$-$(\ref{18})$, $a_1^\alpha a_2^\beta x^\gamma\delta\xi$ is also a solution. 
\end{rmk}
\begin{proof}[Proof of Theorem $\ref{tau thm}$]
Putting
\begin{align*}
(x,a_1,a_2;f_1,f_2)=\left(x,a_1,a_2;\frac{\txi{0,0,1}\txi{0,1,0}}{\txi{0,0,0}\txi{0,1,1}},-\frac{\txi{0,1,1}\txi{1,1,0}}{\txi{0,1,0}\txi{1,1,1}}\right),
\end{align*}
we show
\begin{align}
\label{Tf1}
f_1T(f_1)&=\frac{a_1a_2f_1f_2+a_1x^2q^2f_1+x^2q^2}{a_1a_2f_1f_2+a_1f_1+1}\\
\label{Tf2}
\frac{f_2T(f_2)}{x^2q^2}&=\frac{a_1a_2f_1f_2+a_1f_1+1}{a_1a_2f_1f_2+a_1f_1+x^2q^2}\\
\label{t1f1}
\frac{T_1^{-1}(f_1)q}{f_2}&=\frac{a_1a_2f_1f_2+x^2q^2}{f_1f_2+a_1a_2x^2}\\
\label{t1f2}
\frac{f_1f_2T_1^{-1}(f_2)}{x^2}&=\frac{a_1^2a_2f_1f_2^2+q^2f_1f_2+a_1x^2q^2f_2+a_1a_2x^2q^2}{a_1a_2f_1f_2^2+a_1f_1f_2+x^2q^2f_2+a_1^2a_2x^2}\\
\label{t2f1}
\frac{f_1f_2T_2^{-1}(f_1)}{x^2}&=\frac{q^2f_1^2f_2+a_1a_2^2f_1f_2+a_1a_2x^2q^2f_1+a_2x^2q^2}{a_2f_1^2f_2+a_1a_2f_1f_2+a_1a_2^2x^2f_1+x^2q^2}\\
\label{t2f2}
\frac{T_2^{-1}(f_2)}{qf_1}&=\frac{f_1f_2+a_1a_2x^2}{a_1a_2f_1f_2+x^2q^2}. 
\end{align}
From equations $(\ref{11}), (\ref{14})$ and $(\ref{17})$, note that 
\begin{align}
0&=
    a_1a_2x\txi{0,0,0}\txi{1,2,0}-\frac{1}{a_1a_2}\txi{1,1,0}\txi{0,1,0}+\frac{1}{a_1}\txi{0,1,1}\txi{1,1,-1}
    \nonumber\\
    &=x^2\frac{\txi{0,0,0}}{\txi{0,0,1}}\left(\txi{0,1,0}\txi{1,1,1}-\frac{1}{a_2}\txi{0,1,1}\txi{1,1,0}\right)+\frac{1}{a_1}\txi{0,1,1}\txi{1,1,-1}-\frac{1}{a_1a_2}\txi{0,1,0}\txi{1,1,0}
    \nonumber\\
    &=\frac{a_1a_2x^2\txi{0,0,0}\txi{1,1,1}-\txi{0,0,1}\txi{1,1,0}}{a_1^2a_2^2x\txi{1,0,0}\txi{0,0,1}}\left(a_1a_2x\txi{0,1,0}\txi{1,0,0}-\frac{a_1x^2}{a_2}\txi{0,0,0}\txi{1,1,0}+\txi{0,0,1}\txi{1,1,-1}\right)
    \nonumber\\
    \label{110}
    &=\frac{\txi{0,1,1}}{a_1a_2\txi{0,0,1}}\left(a_1a_2^2x\txi{0,1,0}\txi{1,0,0}-\txi{0,0,0}\txi{1,1,0}+a_2\txi{0,0,1}\txi{1,1,-1}\right).
\end{align}
Using equations $(\ref{12}), (\ref{13}), (\ref{17})$ and $(\ref{110})$, we have

\begin{align*}
\text{{\rm RHS} of (\ref{Tf1})}
 &=
 \frac{a_1x^2q^{2}\txi{0,0,1}\txi{0,1,0}\txi{1,1,1}+\txi{0,1,1}(x^2q^{2}\txi{1,1,1}\txi{0,0,0}-a_1a_2\txi{0,0,1}\txi{1,1,0})}{\txi{0,0,1}(a_1\txi{0,1,0}\txi{1,1,1}-a_1a_2\txi{1,1,0}\txi{0,1,1})+\txi{0,0,0}\txi{1,1,1}\txi{0,1,1}}\\
 &=
 \frac{\txi{0,1,0}}{\txi{0,0,0}}\frac{-a_1a_2^2xq\txi{0,1,1}\txi{1,0,1}+a_1x^2q^{2}\txi{0,0,1}\txi{1,1,1}}{\txi{1,1,1}\txi{0,1,1}-a_1^2a_2^2xq\txi{0,0,1}\txi{1,2,1}}
 =
 \frac{\txi{0,1,0}\txi{0,0,2}}{\txi{0,0,0}\txi{0,1,2}}=\text{{\rm LHS} of (\ref{Tf1})}. 
\end{align*}

Using equations $(\ref{13}), (\ref{17})$ and $(\ref{18})$, we have

\begin{align*}
& \text{{\rm RHS} of (\ref{Tf2})}
 =
 \frac{\txi{1,1,1}\txi{0,0,0}\txi{0,1,1}+\txi{0,0,1}(a_1\txi{0,1,0}\txi{1,1,1}-a_1a_2\txi{1,1,0}\txi{0,1,1})}{x^2q^2\txi{1,1,1}\txi{0,0,0}\txi{0,1,1}+\txi{0,0,1}(a_1\txi{1,1,0}\txi{0,1,1}-a_1a_2\txi{1,1,0}\txi{0,1,1})}\\
 &=
 \frac{\txi{1,1,1}\txi{0,1,1}-a_1^2a_2^2xq\txi{0,0,1}\txi{1,2,1}}{x^2q^2\txi{1,1,1}\txi{0,1,1}-a_1^2a_2^2xq\txi{0,0,1}\txi{1,2,1}}
 =
 \frac{1}{x^2q^2}\frac{\txi{1,1,0}\txi{0,1,2}}{\txi{0,1,0}\txi{1,1,2}}=\text{{\rm LHS} of (\ref{Tf2})}.
\end{align*}

Using equations $(\ref{11})$ and $(\ref{12})$, we have

\begin{align*}
& \text{{\rm RHS} of (\ref{t1f1})}
 =
 \frac{x^2q^2\txi{1,1,1}\txi{0,0,0}-a_1a_2\txi{0,0,1}\txi{1,1,0}}{a_1a_2x^2\txi{0,0,0}\txi{1,1,1}-\txi{0,0,1}\txi{1,1,0}}
 =
 -q\frac{\txi{1,0,1}\txi{0,1,0}}{\txi{0,1,1}\txi{1,0,0}}=\text{{\rm LHS} of (\ref{t1f1})}.
\end{align*}

Using equations $(\ref{11}), (\ref{12}), (\ref{15})$ and $(\ref{16})$, we have

\begin{align*}
\text{{\rm RHS} of (\ref{t1f2})}
 &=
 \frac{a_1\txi{0,1,1}\txi{1,1,0}(a_1a_2\txi{0,0,1}\txi{1,1,0}-x^2q^{2}\txi{0,0,0}\txi{1,1,1})-q^2\txi{0,1,0}\txi{1,1,1}(\txi{0,0,1}\txi{1,1,0}-a_1a_2x^2\txi{0,0,0}\txi{1,1,1})}{a_1\txi{0,1,0}\txi{1,1,1}(a_1a_2x^2\txi{0,0,0}\txi{1,1,1}-\txi{0,0,1}\txi{1,1,0})-\txi{1,1,0}\txi{0,1,1}(x^2q^2\txi{0,0,0}\txi{1,1,1}-a_1a_2\txi{0,0,1}\txi{1,1,0})}\\
 &=
 q\frac{a_1\txi{1,1,0}\txi{1,0,1}+q\txi{1,1,1}\txi{1,0,0}}{a_1\txi{1,1,1}\txi{1,0,0}+q\txi{1,1,0}\txi{1,0,1}}
 =
 \frac{1}{x^2}\frac{\txi{0,0,1}\txi{2,1,0}}{\txi{2,1,1}\txi{0,0,0}}
 =\text{{\rm LHS} of (\ref{t1f2})}.
\end{align*}

Using equations $(\ref{11}), (\ref{12}), (\ref{13})$ and $(\ref{14})$, we have

\begin{align*}
\text{{\rm RHS} of (\ref{t2f1})}
 &=
 \frac{a_2\txi{0,0,0}\txi{0,1,1}(a_1a_2\txi{0,0,1}\txi{1,1,0}-x^2q^{2}\txi{0,0,0}\txi{1,1,1})-q^2\txi{0,0,1}\txi{0,1,0}(a_1a_2x^2\txi{0,0,0}\txi{1,1,1}-\txi{0,0,1}\txi{1,1,0})}{a_2\txi{0,0,1}\txi{0,1,0}(\txi{0,0,1}\txi{1,1,0}-a_1a_2x^2\txi{0,0,0}\txi{1,1,1})-\txi{0,0,0}\txi{0,1,1}(x^2q^2\txi{0,0,0}\txi{1,1,1}-a_1a_2\txi{0,0,1}\txi{1,1,0})}\\
 &=
 q\frac{q\txi{0,0,1}\txi{1,0,0}-a_2\txi{0,0,0}\txi{1,0,1}}{a_2\txi{0,0,1}\txi{1,0,0}-q\txi{0,0,0}\txi{1,0,1}}
 =
 -\frac{1}{x^2}\frac{\txi{1,1,0}\txi{0,-1,1}}{\txi{1,1,1}\txi{0,-1,0}}=\text{{\rm LHS} of (\ref{t2f1})}.
\end{align*}

Using equations $(\ref{11})$ and $(\ref{12})$, we have

\begin{align*}
& \text{{\rm RHS} of (\ref{t2f2})}
 =
 \frac{a_1a_2x^2\txi{0,0,0}\txi{1,1,1}-\txi{0,0,1}\txi{1,1,0}}{x^2q^2\txi{1,1,1}\txi{0,0,0}-a_1a_2\txi{0,0,1}\txi{1,1,0}}
 =
 -\frac{1}{q}\frac{\txi{0,1,1}\txi{1,0,0}}{\txi{1,0,1}\txi{0,1,0}}=\text{{\rm LHS} of (\ref{t2f2})}.
\end{align*}
\end{proof}
\begin{proof}[Proof of Theorem \ref{thm:1}]
Putting $(a_1,a_2,x)=(-q^m,q^{-n},xq^k)$ in the bilinear relations (\ref{11})-(\ref{18}). Then the function $\tilde{\xi}_{0,0,0}=\det\left[\mu(u+v+k\tau,v;m-j-j')\right]_{j,j'=0}^n$ satisfies the bilinear equations $(\ref{11})$--$(\ref{18})$ from Theorem $\ref{bi linear}$. Hence a solution of the type $(A_2+A_1)^{(1)}$ $q$-Painlev\'{e} equation is expressed using the function $\tilde{\xi}_{m,n,k}$, and Theorem \ref{thm:1} holds. 
\end{proof}
The bilinear relations (\ref{11}) and (\ref{12}) are the relations for three pairs of opposite vertices of a cube, respectively. 
\begin{figure}[htbp]
  \begin{minipage}{0.45\linewidth}
    \centering
    \includegraphics[keepaspectratio, scale=0.15]{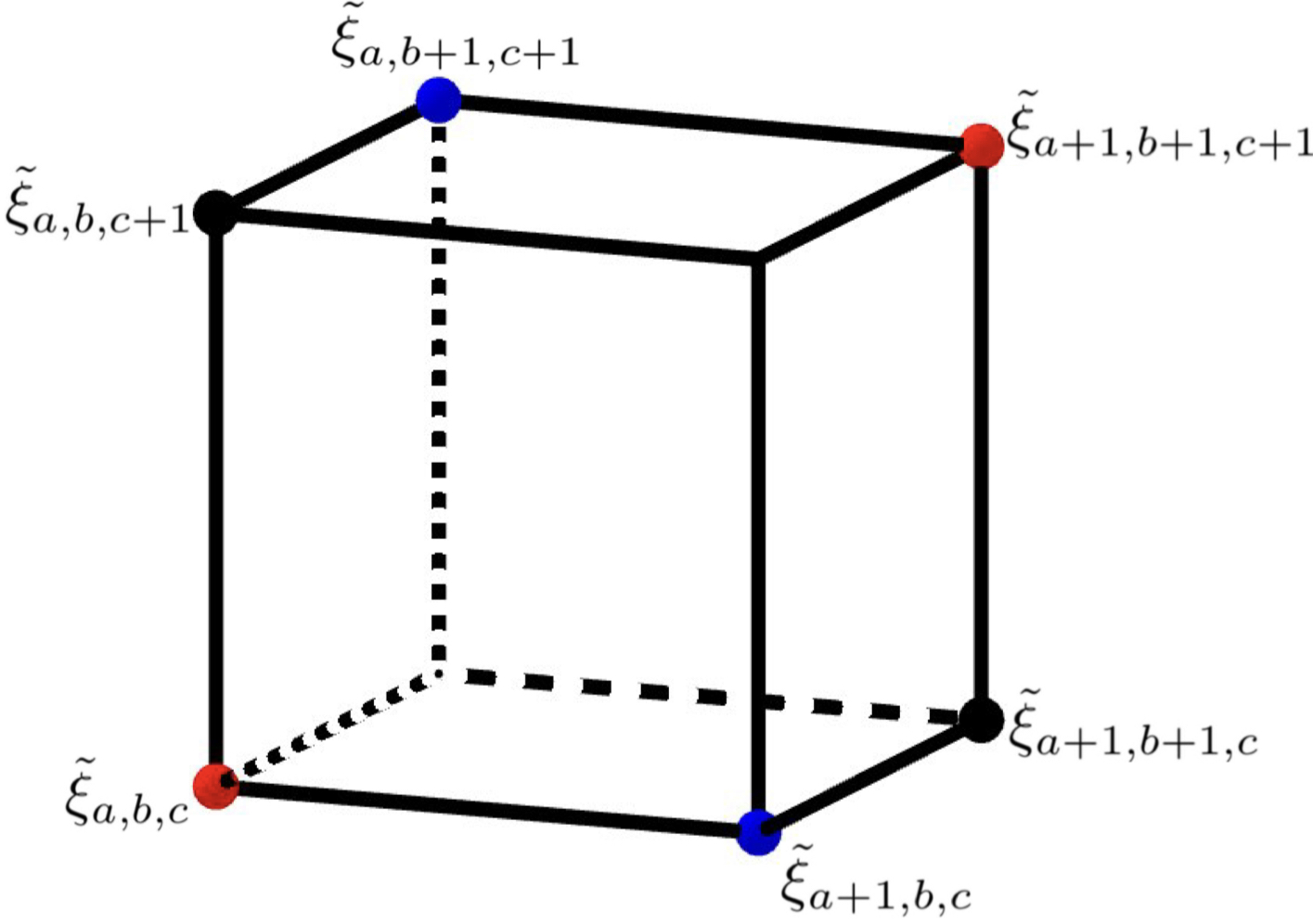}
    \caption{Equation (\ref{11})}
  \end{minipage}
  \begin{minipage}{0.45\linewidth}
    \centering
    \includegraphics[keepaspectratio, scale=0.135]{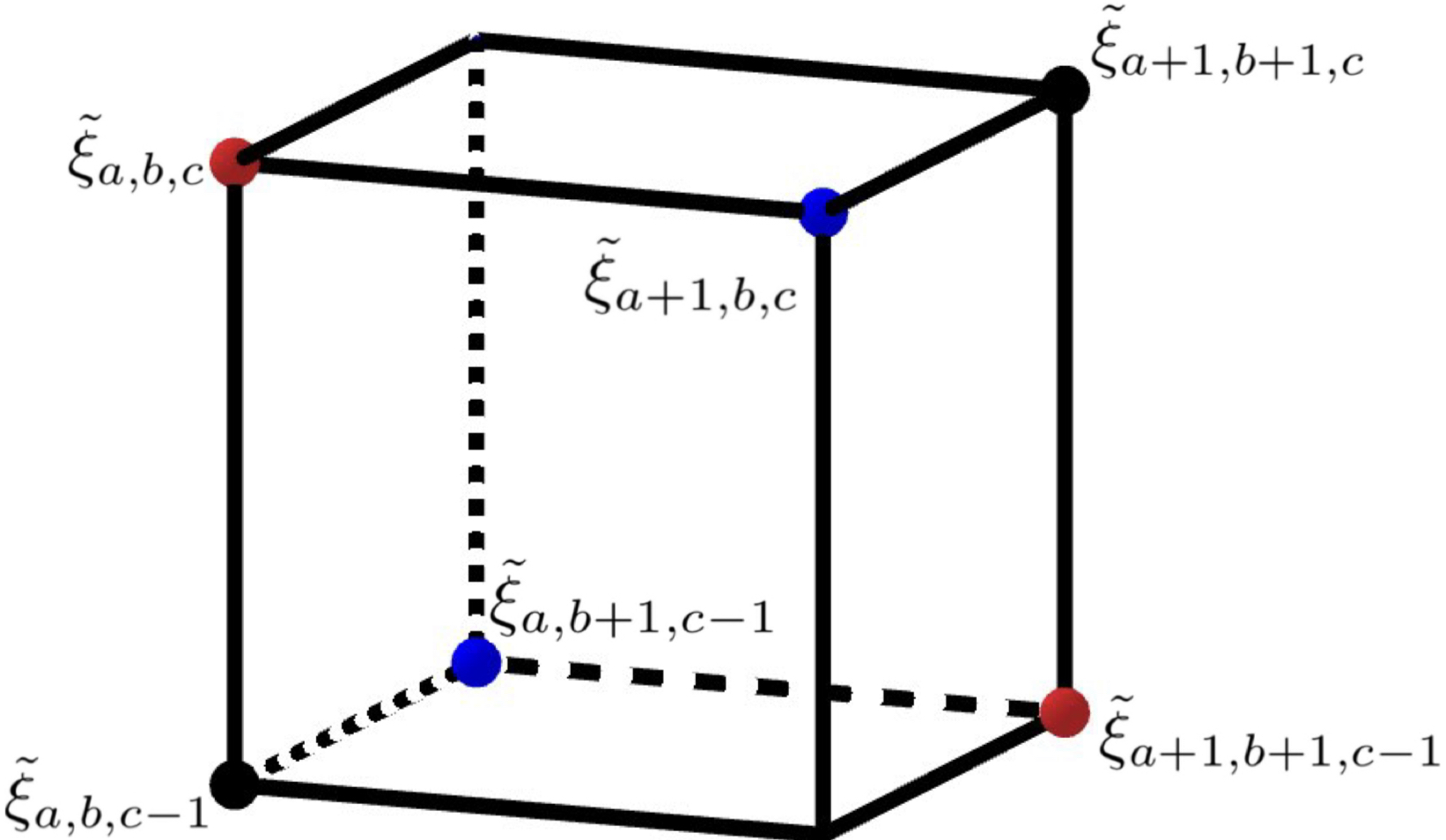}
    \caption{Equation (\ref{12})}
  \end{minipage}
\end{figure}
From these relations, we obtain bilinear relations for any three pairs of opposite vertices of a cube. 
That is, if we give a set of initial values for the six vertices which include all the vertices of a certain face of a cube, we obtain the value of any vertices of the cube. 
From The bilinear relations (\ref{13})-(\ref{18}), we obtain the values of two vertices of a cube whose faces are adjacent to the original cube. 
Therefore, if we give the initial values, the values of any $\xi_{m,n,k}$ is uniquely determined. 
In particular, the solution (\ref{solution}) corresponds to the initial values $\tilde{\xi}_{0,n+1,0}=\tilde{\xi}_{0,n+1,1}=\tilde{\xi}_{1,n+1,0}=\tilde{\xi}_{1,n+1,1}=1, \tilde{\xi}_{0,n,0}=\mu(u+v+k\tau,v;m)$ and $\tilde{\xi}_{0,n,1}=\mu(u+v+(k+1)\tau,v;m)$. 
\section*{Acknowledgments}
The author would like to thank professor Yasuhiko Yamada (Kobe University) for his helpful advice on this paper. He would like to thank professor Genki Shibukawa (Kobe University) for helpful discussion and guidance. He also wish to thank professor Yasuhiro Ohta (Kobe University), Tetsu Masuda (Aoyama Gakuin University) and Hidehito Nagao (Akashi College) for valuable suggestions on $q$-Painlev\'{e} equations.

\medskip
\begin{flushleft}
Satoshi Tsuchimi\\
Department of Mathematics\\
Graduate School of Science\\ 
Kobe University\\
1-1, Rokkodai, Nada-ku\\ 
Kobe, 657-8501\\
JAPAN\\
183s014s@stu.kobe-u.ac.jp
\end{flushleft}
\end{document}